\newcommand{\bbR}{{\Bbb R}}
\newcommand{\bbN}{{\Bbb N}}
\newcommand{\bbC}{{\Bbb C}}
\newcommand {\ind} {\overset{\mathcal{L}}{=}}
\newcommand {\tr} {{\rm tr}}
\newcommand {\eig} {{\rm eig}}
\newcommand{\norm}[1]{\left\|#1\right\|}
\renewcommand{\cite}{\citeyear}
\begin{document}

\title{On integral representations of operator\\ fractional Brownian fields
\thanks{The second author was supported in part by the Louisiana Board of Regents award LEQSF(2008-11)-RD-A-23.
The third author was supported in part by the NSA grant H98230-13-1-0220.}
\thanks{{\em AMS
Subject classification}. Primary: 60G18, 60G15.}
\thanks{{\em Keywords and phrases}: operator fractional Brownian fields, operator scaling,
operator self-similarity, harmonizable representations, moving average representations, anisotropy.} }

\author{Changryong Baek \\ Sungkyunkwan University \and Gustavo Didier \\ Tulane University  \and Vladas Pipiras \\ University of North Carolina}

\bibliographystyle{agsm}

\maketitle

\begin{abstract}
Operator fractional Brownian fields (OFBFs) are Gaussian, stationary-increment vector random fields that satisfy the operator self-similarity relation $\{X(c^{E}t)\}_{t \in \bbR^m} \stackrel{{\mathcal L}}= \{c^{H}X(t)\}_{t \in \bbR^m}$. We establish a general harmonizable representation (Fourier domain stochastic integral) for OFBFs. Under additional assumptions, we also show how the harmonizable representation can be reexpressed as a moving average stochastic integral, thus answering an open problem described in Bierm\'{e} et al.\ \cite{bierme:meerschaert:scheffler:2007}.
\end{abstract}


\section{Introduction}
\label{s:intro}

An {\it operator fractional Brownian field} (OFBF, in short) is a vector-valued random field $X = \{X(t)\}_{t\in \bbR^m} \in\bbR^n$ satisfying the following three properties: $(i)$ it is Gaussian with mean zero; $(ii)$ it has stationary increments, that is, for any $h\in\bbR^m$, $\{X(t+h) - X(h)\}_{t\in\bbR^m} \stackrel{{\mathcal L}}= \{X(t) - X(0)\}_{t\in\bbR^m}$, where $\stackrel{{\mathcal L}}=$ denotes the equality of finite-dimensional distributions; $(iii)$ it is operator self-similar in the sense that there are real square matrices $E$, $H$ of appropriate dimensions so that
\begin{equation}\label{e:operator_scaling}
\{X(c^{E}t)\}_{t \in \bbR^m} \stackrel{{\mathcal L}}= \{c^{H}X(t)\}_{t \in \bbR^m}, \quad c > 0.
\end{equation}
For any square matrix $M$, the term $c^M$ is defined in the usual way as $\sum^{\infty}_{k=0} \frac{(\log c)^k M^k }{k!}$. In this paper, OFBFs are assumed to be proper in the sense that, for each $t\in \bbR^m$, the distribution of $X(t)$ is not contained in a proper subset of $\bbR^n$.

When both the domain and range are unidimensional ($m=1$, $n=1$ and $E=1$), an OFBF is simply the celebrated fractional Brownian motion (FBM). When the \textit{domain} is multidimensional ($m\geq 2$, $n=1$), OFBFs are referred to as operator scaling Brownian fields. The latter and, more generally, operator scaling random fields were studied in Bonami and Estrade \cite{bonami:estrade:2003}, Bierm\'{e}, Meerschaert and Scheffler \cite{bierme:meerschaert:scheffler:2007}, Bierm\'{e} and Lacaux \cite{bierme:lacaux:2009}, Bierm\'{e}, Benhamou and Richard \cite{bierme:benhamou:richard:2009}, Bierm\'{e}, Estrade and Kaj \cite{bierme:estrade:kaj:2010}, Clausel \cite{clausel:2010}, Clausel and V\'{e}del \cite{clausel:vedel:2013}. When the \textit{range} is multidimensional ($m=1$, $n\geq 2$ and $E=1$), OFBFs are called operator fractional Brownian motions (OFBMs). The latter and, more generally, operator self-similar stochastic processes were studied in Laha and Rohatgi \cite{laha:rohatgi:1981}, Hudson and Mason \cite{hudson:mason:1982}, Maejima and Mason \cite{maejima:1994}, Cohen et al.\ \cite{cohen:meerchaert:rosinski:2010}, Didier and Pipiras \cite{didier:pipiras:2011,didier:pipiras:2012}.

This paper constitutes a step towards the synthesis of the aforementioned domain- and range-based streams of literature. So far, few works have accounted for both sources of multidimensionality, such as Li and Xiao \cite{li:xiao:2011}. Our contribution is two-fold, namely, constructing widely encompassing harmonizable (Fourier domain) integral representations of OFBFs, as well as establishing their relation to moving average (time domain) representations.

The starting point is to build harmonizable representations (Theorem \ref{t:ofbf_harmonizable_repres}). Such representations for various subclasses of OFBFs have been considered in the literature and proved useful, for instance, in the study of sample path properties. The novelty behind our results lies in necessity, namely, the way the properties of OFBFs tangibly shape the spectral measure. This leads to harmonizable representations that characterize the entire class of OFBFs. When $m=1$, the analogous result can be found in Didier and Pipiras \cite{didier:pipiras:2011}; however, the characterization provided in this work is new even in the case of scalar fields, i.e., $n=1$ and $m\geq 2$ (see Clausel and Vedel \cite{clausel:vedel:2011} for a pseudo-norm perspective). In Didier et al.\ \cite{didier:meerschaert:pipiras:2014}, the harmonizable representations developed in this paper are the main analytical tool for the comparison, synthesis and scrutiny of the domain and range symmetries of general OFBFs.

Recent studies have begun to reveal the ways symmetry, or the lack thereof, make multidimensional fractional systems different from their unidimensional counterparts (see Didier and Pipiras \cite{didier:pipiras:2011} on time-reversibility). In the scalar fields case ($m \geq 2$, $n=1$), Bierm\'{e} et al.\ \cite{bierme:meerschaert:scheffler:2007}, p.325, start off with two isotropic OFBFs $X_{\varphi}$, $X_{\psi}$ with the same scaling parameters $E = I$ and $H \in \bbR$, but defined, respectively, from moving average and harmonizable representations generated by the (operator-homogeneous; see \eqref{e:H-E-left-homogeneous}) kernel filters $\varphi$ and $\psi$. It can then be shown that
\begin{equation}\label{e:cov_function_under_isotropy}
E X_{\Lambda}(s)X_{\Lambda}(t) = \frac{c_{\Lambda}}{2} \{\|s\|^{2H} + \|t\|^{2H} - \| s-t \|^{2H}  \}, \quad \Lambda = \varphi, \psi,
\end{equation}
whence $X_{\varphi}$ and $X_{\psi}$ are equal in law up to a constant. However, under a general $E$ and, thus, anisotropy, the terms $\|\cdot\|^{2H}$ and $c_{\Lambda}$ in \eqref{e:cov_function_under_isotropy} are replaced by $\tau(\cdot)^{2H}$ and $\sigma^2_{\Lambda}(l(\cdot))$, respectively, for radial and spherical functions $\tau$ and $l$ that depend on $E$ (see \eqref{e:x=tau(x)E*l(x)} below). Consequently, it is difficult to compare $\sigma^2_{\varphi}(l(\cdot))$ and $\sigma^2_{\psi}(l(\cdot))$. Bierm\'{e} et al \cite{bierme:meerschaert:scheffler:2007} then go on to describe the question that has been open to date: under what relationship between the moving average and harmonizable filters $\varphi$, $\psi$ are the respective integral representations equivalent?

A natural way to link harmonizable and moving average representations is by taking the Fourier transform of one of the kernel functions. In the case of OFBMs ($m=1$, $n\geq 2$), this approach was applied in Didier and Pipiras \cite{didier:pipiras:2011}. The case of OFBFs ($m\geq 2$), on the other hand, is more challenging. In particular, it can be shown that the natural integration procedure for homogeneous functions only holds when, indeed, $m=1$ (see Remark \ref{re:tech diff} below). In this paper, we show how the moving average representation for OFBFs can be obtained through the Fourier transform under additional mild assumptions on the harmonizable representation (Theorem \ref{t:int-rep-time}). We thus provide an answer to the open problem described in Bierm\'{e} et al.\ \cite{bierme:meerschaert:scheffler:2007}.

The paper is organized as follows. In Section \ref{s:preliminaries}, the notation and necessary background from the theory of operator self-similar vector random fields are laid out. In Section \ref{s:integral_repres_OFBF}, the harmonizable and moving average representations for OFBFs are established.

\section{Basic definitions and notation}
\label{s:preliminaries}

In this section, we lay out the notation and conceptual framework used in the paper.

$M(n)$ and $M(n,\bbC)$ denote, respectively, the spaces of $n \times n$ matrices with real or complex entries, whereas the space of $n \times m$ matrices with real entries is denoted by $M(n, m, \bbR)$. $A^*$ stands for the Hermitian transpose of $A \in M(n, \bbC)$, whereas $\| \cdot \|$ indicates the Euclidean norm.

The Jordan spectra (characteristic roots) of the matrices $E$ and $H$ in (\ref{e:operator_scaling}) are written
\begin{equation}\label{e:hk}
\textnormal{eig}(E) = \{e_1, e_2, \hdots, e_{m}\}, \quad \textnormal{eig}(H) = \{h_1, h_2, \hdots, h_n\}.
\end{equation}
Specific assumptions on the range of eigenvalues will be made in propositions. An OFBF with exponents $E$ and $H$ is written $B_{E,H} = \left\{ B_{E,H} (t) \right\}_{t \in \bbR^m}$.

We will make wide use of changes-of-variables into non-Euclidean polar coordinates, as discussed in Meerschaert and Scheffler \cite{meerschaert:scheffler:2001} and Bierm\'{e} et al.\ \cite{bierme:meerschaert:scheffler:2007}. Suppose the real parts of the eigenvalues of $E \in M(m)$ are positive. Then, there exists a norm $\| \cdot \|_0$ on $\bbR^m$ for which
\begin{equation}\label{e:Phi_change-of-variables}
\Psi:(0,\infty) \times S_0 \rightarrow \bbR^m \backslash\{0\},
\quad \Psi(r,\theta):=r^{E}\theta,
\end{equation}
is a homeomorphism, where $S_0 = \{x \in \bbR^m: \norm{x}_{0}=1\}$. One can then uniquely write the {\it polar coordinates} representation
\begin{equation}\label{e:x=tau(x)E*l(x)}
\bbR^m \ni x = \tau_E(x)^E l_E(x),
\end{equation} where the functions $\tau_E(x)$, $l_E(x)$ are called the {\it radial} and {\it directional} parts, respectively. One such norm $\| \cdot \|_0$ may be calculated explicitly by means of the expression
\begin{equation}\label{e:E0_norm}
\norm{x}_{0} = \int^{1}_{0} \norm{t^{E}x}_{*}\frac{dt}{t},
\end{equation}
where $\| \cdot \|_{*}$ is any norm in $\bbR^m$.
The uniqueness of the representation (\ref{e:x=tau(x)E*l(x)}) yields
\begin{equation}\label{e:tau_E}
\tau_E(c^E x) = c \tau_E(x), \quad l_E(c^E x) = l_E(x).
\end{equation}
Based on $\| \cdot \|_{0}$, a {\it change-of-variables formula} into polar coordinates is established in Bierm\'{e} et al.\ \cite{bierme:meerschaert:scheffler:2007}, Proposition 2.3: there exists a unique finite Radon measure $\sigma( d \theta)$ such that, for any scalar-valued $h \in L^1(\bbR^m)$,
\begin{equation}\label{e:polar_coordinates}
\int_{\bbR^m} h(x) dx = \int^{\infty}_0 \int_{S_0} h(r^E \theta) r^{\tr(E)-1} \sigma(d \theta) dr.
\end{equation}

We shall use bounds on the radial part $\tau_E(x)$. Let $e_{\min} = \min \Re( \eig(E))$, $e_{\max} = \max \Re( \eig(E))$. In \eqref{e:x=tau(x)E*l(x)}, the radial component is related to the norm $\|\cdot\|_0$ in the sense that, for all $\delta > 0$, there are constants $C_j$, $j=1,\hdots, 4$, such that
\begin{equation}\label{e:tau(x)_vs_|x|_small_|x|}
C_1 \|x\|^{1/(e_{\min}+\delta)}_0 \leq \tau_E(x) \leq C_2 \|x\|^{1/(e_{\max}-\delta)}_0, \quad \|x\|_0 \leq 1, \quad \tau_E(x)\leq 1,
\end{equation}
\begin{equation}\label{e:tau(x)_vs_|x|_large_|x|}
C_3 \|x\|^{1/(e_{\max}-\delta)}_0 \leq \tau_E(x) \leq C_4 \|x\|^{1/(e_{\min}+\delta)}_0, \quad \|x\|_0 > 1, \quad \tau_E(x)> 1
\end{equation}
(Bierm\'{e} et al.\ \cite{bierme:meerschaert:scheffler:2007}, p.\ 314).
Note that, since all norms on $\bbR^m$ are equivalent, $\| \cdot \|_0$ can be replaced by any other norm in (\ref{e:tau(x)_vs_|x|_small_|x|})--(\ref{e:tau(x)_vs_|x|_large_|x|}).

When $n=1$, Meerschaert and Scheffler \cite{meerschaert:scheffler:2001} use $E$--homogeneous functions, namely, functions $\varphi: \bbR^m \rightarrow \bbC$ which satisfy the scaling relation $\varphi(c^Ex) = c\varphi(x)$, $c > 0$, $x \in \bbR^m\backslash\{0\}$. However, when the functions $\varphi$ are matrix-valued, like those often encountered in the framework of this paper, more than one notion of operator-homogeneity is of interest. This idea is laid out in detail in the next definition.

\begin{definition}
Let $E\in M(m)$, $H\in M(n)$, and let $\varphi: \bbR^m \rightarrow M(n,\bbC)$. The matrix-valued function $\varphi$ is called $(E,H)$--{\it left-homogeneous} if for any $c > 0$ and $x \in \bbR^m \backslash\{0\}$,
\begin{equation} \label{e:H-E-left-homogeneous}
\varphi(c^E x) = c^{H} \varphi(x),
\end{equation}
whereas it is called $(E,H)$--{\it homogeneous} if
\begin{equation}\label{e:H-E-homogeneous}
\varphi(c^E x) = c^{H/2} \varphi(x) c^{H^*/2}.
\end{equation}
\end{definition}
Although one can straightforwardly conceive the notion of right-homogeneity, it will not be used in our analysis and will thus be omitted.

Polar coordinate representations provide a convenient means of expressing $(E,H)$--left-homogeneous or $(E,H)$--homogeneous functions. In the case of the latter, note that taking $c = \tau_E(x)^{-1}$ in (\ref{e:H-E-homogeneous}) yields $\varphi(\tau_E(x)^{-E} x) = \tau_E(x)^{-H/2}\varphi(x)\tau_E(x)^{-H^*/2}$, and hence
\begin{equation}\label{e:multidim-mfbm-homog-f-polar}
\varphi(x) = \tau_E(x)^{H/2} \varphi(l_E(x))\tau_E(x)^{H^*/2}.
\end{equation}
In particular, $\varphi$ is determined by its values on the sphere $S_0$. Conversely, by using (\ref{e:tau_E}), the right-hand side of (\ref{e:multidim-mfbm-homog-f-polar}) always defines an $(E,H)$--homogeneous function. In this paper, the concept of left-homogeneity is associated with filters that go into integral representations, whereas the concept of homogeneity applies to spectral densities, or ``squares". However, note that an $(E,H)$--homogeneous $\varphi$ does not necessarily yield a positive semidefinite matrix at a given $x$, since $\varphi(l_E(x))$ may have complex eigenvalues.

The proof of the next technical lemma illustrates the use of the polar coordinate representation (\ref{e:x=tau(x)E*l(x)}). The lemma is used in the proof of Theorem \ref{t:ofbf_harmonizable_repres} below.

\begin{lemma}\label{l:multidim-mfbm-homog-f-ae}
Suppose $E \in M(m)$ has eigenvalues with positive real parts. If $h: \bbR^m \to M(n, \bbC)$ is such that, for all $c > 0$,
\begin{equation}\label{e:multidim-mfbm-homog-f-ae}
    h(c^Ex) = c^{H/2} h(x) c^{H^*/2}\quad dx\mbox{-a.e.},
\end{equation}
then there exists an $(E,H)$--homogeneous function $\varphi$ such that $\varphi(x) = h(x)$ $dx$-a.e.
\end{lemma}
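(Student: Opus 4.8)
The plan is to normalize $h$ by its radial factor and thereby reduce the statement to the following: a matrix-valued function that is, for each fixed $c>0$, invariant $dx$-a.e.\ under $x \mapsto c^E x$ must agree $dx$-a.e.\ with a function of the directional part $l_E(\cdot)$ alone.

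Concretely, I would first set $k(x) := \tau_E(x)^{-H/2}\, h(x)\, \tau_E(x)^{-H^*/2}$ for $x \neq 0$. Using \eqref{e:tau_E} and the fact that scalar powers of $H$ commute with one another (so that $(c\,\tau_E(x))^{\pm H/2} = c^{\pm H/2}\tau_E(x)^{\pm H/2}$ and these commute), the hypothesis \eqref{e:multidim-mfbm-homog-f-ae} rewrites as: for every $c>0$, $k(c^E x) = k(x)$ for $dx$-a.e.\ $x$. Since $\tau_E(x)^{H/2}$ is invertible, it suffices to produce $g: S_0 \to M(n,\bbC)$ with $k(x) = g(l_E(x))$ for $dx$-a.e.\ $x$: then $\varphi(x) := \tau_E(x)^{H/2} g(l_E(x)) \tau_E(x)^{H^*/2}$ is $(E,H)$-homogeneous by \eqref{e:tau_E} and the observation following \eqref{e:multidim-mfbm-homog-f-polar}, and $\varphi = h$ $dx$-a.e.\ because $h(x) = \tau_E(x)^{H/2} k(x) \tau_E(x)^{H^*/2}$.

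To extract $g$, I would combine a Fubini argument in the scaling variable with the polar disintegration \eqref{e:polar_coordinates}. Applying Tonelli to the indicator of $\{(c,x): k(c^E x)\neq k(x)\}$ on $(0,\infty)\times\bbR^m$ with $\tfrac{dc}{c}\,dx$, the a.e.-in-$x$ invariance for each fixed $c$ upgrades to: for $dx$-a.e.\ $x$, $k(c^E x) = k(x)$ for a.e.\ $c>0$. Writing such an $x$ as $x = r^E\theta$ in polar coordinates and using $c^E r^E = (cr)^E$, this says that $s \mapsto k(s^E\theta)$ is a.e.\ equal to the constant $k(r^E\theta)$. Next, \eqref{e:polar_coordinates} (applied to indicators) shows that a Lebesgue-null set in $\bbR^m$ meets the ray $\{s^E\theta: s>0\}$ in a one-dimensional null set for $\sigma$-a.e.\ $\theta$, and conversely; hence, applying this to the co-null set of ``good'' $x$, for $\sigma$-a.e.\ $\theta$ there is a well-defined matrix $g(\theta)$ equal to the a.e.-value of $s \mapsto k(s^E\theta)$ (independent of the representative). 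Setting $g := 0$ on the remaining $\sigma$-null subset of $S_0$ and invoking \eqref{e:polar_coordinates} once more, one checks that $\{x: k(x)\neq g(l_E(x))\}$ is Lebesgue-null, which completes the argument.

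The main obstacle is exactly this last passage. The hypothesis controls $h$ only modulo $dx$-null sets and only ``$c$ by $c$,'' so one cannot simply restrict $h$ (or $k$) to the measure-zero sphere $S_0$; the two Fubini-type steps — one in the group parameter $c$, and one hidden in the disintegration \eqref{e:polar_coordinates} of Lebesgue measure into radial and spherical components — are precisely what make the pointwise definition of $g$ on $S_0$ legitimate and the a.e.\ identity $\varphi = h$ valid. (Throughout, I assume, as is implicit, that $h$ is measurable, so that the sets to which Tonelli is applied are genuinely measurable.)
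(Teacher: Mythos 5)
Your argument is correct, and its engine is the same as the paper's: a Tonelli/Fubini step in the scaling parameter $c$ that upgrades the hypothesis ``for every $c$, for $dx$-a.e.\ $x$'' into a statement holding for $dc\,dx$-a.e.\ $(c,x)$, after which one reads off an $(E,H)$--homogeneous representative of the form $\tau_E(x)^{H/2}(\cdot)\,\tau_E(x)^{H^*/2}$. Where you diverge is in the extraction of the spherical data: you invoke the polar disintegration \eqref{e:polar_coordinates} a second time to show that, for $\sigma$-a.e.\ $\theta$, the map $s \mapsto k(s^E\theta)$ has a well-defined essential value $g(\theta)$, and then set $\varphi = \tau_E^{H/2}\,(g\circ l_E)\,\tau_E^{H^*/2}$. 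The paper takes a shortcut that avoids the disintegration and the essential-value construction altogether: from the $dc\,dx$-a.e.\ identity it selects a \emph{single} $c_0>0$ for which $h(x) = \tau_E(x)^{H/2}c_0^{H/2}h(c_0^{-E}l_E(x))c_0^{H^*/2}\tau_E(x)^{H^*/2}$ holds $dx$-a.e., and then simply \emph{defines} $\varphi$ by that right-hand side; since $h$ is a genuine everywhere-defined function, its values on the null set $c_0^{-E}S_0$ can be used directly. Your route costs you the extra (true, but not free) claim that Lebesgue-null sets in $\bbR^m$ correspond to $dr\times\sigma(d\theta)$-null sets under the polar map, plus the independence of $g(\theta)$ from the representative $r$; the paper's route buys the same conclusion with one existential quantifier over $c_0$. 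Both are valid, and your closing caveat about measurability of $h$ (needed for either version of Tonelli) applies equally to the paper's proof.
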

\begin{proof}
Let $x \in \bbR^m \backslash \{0\}$ be such that the relation \eqref{e:multidim-mfbm-homog-f-ae} holds. Then,
$$
\int_0^
\infty 1_{\left\{h(c^{-E}x) \neq c^{-H/2}h(x) c^{-H^*/2} \right\}} (c, x)   dc = 0.
$$
Equivalently, by making the change of variables $y = c \tau_E(x)^{-1}$,
$$
0 = \tau_E(x) \int_0^\infty 1_{ \left\{ h(y^{-E}l_E(x)) \neq y^{-H/2}\tau_E(x)^{-H/2}h(x) \tau_E(x)^{-H^*/2}y^{-H^*/2} \right\}} (y, x) dy .
$$
As a consequence, since \eqref{e:multidim-mfbm-homog-f-ae} holds $dx\textnormal{-a.e.}$,
$$
\int_{\bbR^n} \int_0^\infty 1_{ \left\{ h(c^{-E}l_E(x)) \neq c^{-H/2}\tau_E(x)^{-H/2}h(x) \tau_E(x)^{-H^*/2}c^{-H^*/2} \right\} } (c, x) dcdx = 0.
$$
By Fubini's theorem,
$$
h(x) = \tau_E(x)^{H/2}c^{H/2}h(c^{-E} l_E(x)) c^{H^*/2}\tau_E(x)^{H^*/2} \quad dxdc\textnormal{-a.e.}
$$
Therefore, there exists $c_0 > 0$ such that
\begin{equation}\label{e:h(x)=scaling_relation_with_fixed_c0}
h(x) = \tau_E(x)^{H/2}c^{H/2}_0 h(c^{-E}_0 l_E(x))c^{H^*/2}_0 \tau_E(x)^{H^*/2} \quad dx\textnormal{-a.e.}
\end{equation}
In view of \eqref{e:h(x)=scaling_relation_with_fixed_c0}, define $\varphi(x)= \tau_E(x)^{H/2}c^{H/2}_0 h(c^{-E}_0 l_E(x))c^{H^*/2}_0 \tau_E(x)^{H^*/2}$, $x \in \bbR^m \backslash\{0\}$, which is an $(E,H)$--homogeneous function as discussed following \eqref{e:multidim-mfbm-homog-f-polar}. $\Box$\\
\end{proof}

\begin{remark} A result analogous to Lemma \ref{l:multidim-mfbm-homog-f-ae} can be similarly established for left-homogeneous functions. That is, if $h$ satisfies $h(c^E x) = c^H h(x)$ $dx\textnormal{-a.e.}$ for any fixed $c > 0$, then there is an $(E,H)$--left-homogeneous function $\varphi$ so that $h(x) = \varphi(x)$ $dx\textnormal{-a.e.}$

\end{remark}



\section{Integral representations of OFBFs}
\label{s:integral_repres_OFBF}

\subsection{Harmonizable representations}

In the next result, we establish harmonizable representations of OFBFs. The representations draw upon random measures having the following properties. A $\bbC^n$--valued Gaussian random measure $\widetilde{Y}$ on $\bbR^m$ will be called Hermitian with matrix-valued control measure $F$ if it satisfies $E \widetilde{Y}(dx) \widetilde{Y}(dx)^* = F(dx)$ and $\widetilde{Y}(-dx) = \overline{\widetilde{Y}(dx)}$. The associated distribution function $F$ then takes values in the cone of Hermitian positive semidefinite matrices.

\begin{theorem}\label{t:ofbf_harmonizable_repres}
Let $B_{E,H} = \{B_{E,H}(t)\}_{t\in\bbR^m}$ be an OFBF with matrix exponents $E$ and $H$ satisfying
\begin{equation}\label{e:exist_harmon_representation_conditions_eigens}
0 < \min \Re(\textnormal{eig}(H)) \leq \max  \Re(\textnormal{eig}(H)) < \min \Re(\textnormal{eig}(E^*)).
\end{equation}
\begin{itemize}
\item [$(i)$] Then, $B_{E,H}$ has a harmonizable representation
\begin{equation}\label{e:int-rep-spectral-F}
\{B_{E,H}(t)\}_{t\in\bbR^m} \stackrel{{\mathcal L}}{=} \Big\{\int_{\bbR^m} (e^{i\langle x,t\rangle} - 1)  \widetilde{B}_F(dx) \Big\}_{t\in\bbR^m},
\end{equation}
where $\widetilde{B}_F(dx)$ is an Hermitian Gaussian random measure on $\bbR^n$ with control measure $F(dx)$ such that $F$ is $(-E^*, 2H)$--homogeneous, that is,
\begin{equation}\label{e:int-rep-spectral-F-homog-meas}
F(d(c^{-E^*}x))= c^{H}F(dx)c^{H^*}, \quad c>0.
\end{equation}
\item [$(ii)$] Let $H_E = H + \textnormal{tr}(E^*)I/2$, and assume that the spectral measure $F(dx)$ is absolutely continuous with density $f(x)$. Then, there exists an $(E,2 H_E)$--homogeneous function $\psi$ such that $\psi(x) = f(x)$ $dx\textnormal{-a.e.}$ and
\begin{eqnarray}\label{e:int-rep-spectral-f-1}
  \{B_{E,H}(t)\}_{t\in\bbR^m}  \stackrel{{\mathcal L}}{=} \Big\{\int_{\bbR^m} (e^{i\langle x,t\rangle} - 1)  g(x) \widetilde{B}(dx) \Big\}_{t\in\bbR^m},
\end{eqnarray}
where $g(x) = \psi^{1/2}(x)$ a.e., and $\widetilde{B}(dx)$ is an Hermitian Gaussian random measure on $\bbR^m$ with Lebesgue control measure. Moreover, $g$ is $(E^*,-H_E)$--left-homogeneous.
\end{itemize}
\end{theorem}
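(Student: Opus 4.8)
\medskip

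\noindent\textbf{Proof proposal.} The argument proceeds in two parts. For $(i)$ the plan is to combine a Yaglom-type spectral representation of Gaussian, stationary-increment vector fields with the operator self-similarity \eqref{e:operator_scaling}; for $(ii)$, to read off the homogeneity of the spectral density from \eqref{e:int-rep-spectral-F-homog-meas}, upgrade it to a genuine homogeneity via Lemma \ref{l:multidim-mfbm-homog-f-ae}, and then extract a left-homogeneous matrix square root.

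For part $(i)$, I would first record two consequences of \eqref{e:operator_scaling} and \eqref{e:exist_harmon_representation_conditions_eigens}. Since $B_{E,H}(0)\stackrel{{\mathcal L}}{=}c^{H}B_{E,H}(0)$ and $\|c^{H}\|\to 0$ as $c\downarrow 0$ (because $\min\Re(\textnormal{eig}(H))>0$), one gets $B_{E,H}(0)=0$ a.s. Moreover, writing $D(u):=E\,B_{E,H}(u)B_{E,H}(u)^{*}$, which is $(E,2H)$--homogeneous by \eqref{e:operator_scaling}, one has $E\|B_{E,H}(t)-B_{E,H}(s)\|^{2}=\textnormal{tr}\,D(t-s)\to 0$ as $t\to s$, so $B_{E,H}$ is mean-square continuous and hence admits a Yaglom-type spectral representation
\[
\{B_{E,H}(t)\}_{t\in\bbR^{m}}\stackrel{{\mathcal L}}{=}\Big\{\int_{\bbR^{m}}(e^{i\langle x,t\rangle}-1)\,\widetilde{B}_{F}(dx)+Vt\Big\}_{t\in\bbR^{m}},
\]
where $\widetilde{B}_{F}$ is an Hermitian Gaussian random measure with control measure $F$ such that $\int_{\bbR^{m}}\min(1,\|x\|^{2})\,\textnormal{tr}\,F(dx)<\infty$, and $V$ is a Gaussian $M(n,m,\bbR)$--valued matrix jointly Gaussian with $\widetilde{B}_{F}$. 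Substituting this into \eqref{e:operator_scaling}, using $\langle x,c^{E}t\rangle=\langle c^{E^{*}}x,t\rangle$ and the change of variable $x\mapsto c^{-E^{*}}x$ in the integral, the field $B_{E,H}(c^{E}\cdot)$ is equal in law to a spectral field with control measure $A\mapsto F(c^{-E^{*}}A)$ and linear part $Vc^{E}$, while $c^{H}B_{E,H}(\cdot)$ has control measure $c^{H}F(\cdot)c^{H^{*}}$ and linear part $c^{H}V$. Equating covariance functions and using the uniqueness of the spectral measure then yields \eqref{e:int-rep-spectral-F-homog-meas}, while matching the linear parts gives $Vc^{E}\stackrel{{\mathcal L}}{=}c^{H}V$ for all $c>0$. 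Vectorizing, the covariance $W$ of $\textnormal{vec}(V)$ satisfies $W=(c^{-E^{*}}\otimes c^{H})\,W\,(c^{-E^{*}}\otimes c^{H})^{*}$; the eigenvalues of $c^{-E^{*}}\otimes c^{H}$ have moduli $c^{\Re(h_{j})-\Re(e_{i})}$ with $\Re(h_{j})-\Re(e_{i})<0$ for all $i,j$ by \eqref{e:exist_harmon_representation_conditions_eigens}, so $\|c^{-E^{*}}\otimes c^{H}\|\to 0$ as $c\to\infty$, forcing $W=0$ and hence $V=0$ a.s. Finally, I would re-derive $\int_{\bbR^{m}}\min(1,\|x\|^{2})\,\textnormal{tr}\,F(dx)<\infty$ directly from \eqref{e:int-rep-spectral-F-homog-meas} and \eqref{e:exist_harmon_representation_conditions_eigens} by expressing $\textnormal{tr}\,F$ in the polar coordinates \eqref{e:polar_coordinates} associated with $E^{*}$ and estimating $\|x\|$ via \eqref{e:tau(x)_vs_|x|_small_|x|}--\eqref{e:tau(x)_vs_|x|_large_|x|}, the upper eigenvalue bound handling the contribution near the origin and the lower one that at infinity. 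This gives \eqref{e:int-rep-spectral-F}--\eqref{e:int-rep-spectral-F-homog-meas}.

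For part $(ii)$, suppose $F(dx)=f(x)\,dx$. Writing \eqref{e:int-rep-spectral-F-homog-meas} in terms of densities, the change of variable $x\mapsto c^{-E^{*}}x$ contributes the Jacobian $|\det c^{-E^{*}}|=c^{-\textnormal{tr}(E^{*})}$, so that, for each fixed $c>0$,
\[
f(c^{-E^{*}}x)=c^{\textnormal{tr}(E^{*})}\,c^{H}f(x)c^{H^{*}}=c^{H_{E}}f(x)c^{H_{E}^{*}}\quad dx\textnormal{-a.e.}
\]
Equivalently, $f(c^{E^{*}}x)=c^{-H_{E}}f(x)c^{-H_{E}^{*}}$ $dx$-a.e., and since $\Re(\textnormal{eig}(E^{*}))>0$ one may apply Lemma \ref{l:multidim-mfbm-homog-f-ae} with $E^{*}$ and $-2H_{E}$ in place of $E$ and $H$ to obtain a genuinely homogeneous function $\psi$ with $\psi=f$ $dx$-a.e.; being operator-homogeneous, $\psi$ is Hermitian positive semidefinite a.e.\ and is determined by its values on $S_{0}$ through $\psi(x)=\tau_{E^{*}}(x)^{-H_{E}}\psi(l_{E^{*}}(x))\tau_{E^{*}}(x)^{-H_{E}^{*}}$. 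To construct $g$, I would fix a measurable square root $g_{0}$ of $\psi$ on $S_{0}$ with $g_{0}(-\theta)=\overline{g_{0}(\theta)}$ and set $g(x):=\tau_{E^{*}}(x)^{-H_{E}}g_{0}(l_{E^{*}}(x))$ for $x\neq 0$ (a left-homogeneous measurable square root of $\psi$ agreeing with $\psi^{1/2}$ on $S_{0}$). Using \eqref{e:tau_E} for $E^{*}$, one checks that $g(x)g(x)^{*}=\psi(x)=f(x)$ $dx$-a.e., that $g$ is $(E^{*},-H_{E})$--left-homogeneous, and (using $\tau_{E^{*}}(-x)=\tau_{E^{*}}(x)$, $l_{E^{*}}(-x)=-l_{E^{*}}(x)$) that $g(-x)=\overline{g(x)}$ $dx$-a.e. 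Then the field $\int_{\bbR^{m}}(e^{i\langle x,t\rangle}-1)g(x)\widetilde{B}(dx)$, with $\widetilde{B}$ Hermitian Gaussian with Lebesgue control measure, is mean-zero Gaussian, real-valued, and has covariance $\int_{\bbR^{m}}(e^{i\langle x,t\rangle}-1)\overline{(e^{i\langle x,s\rangle}-1)}\,g(x)g(x)^{*}\,dx=\int_{\bbR^{m}}(e^{i\langle x,t\rangle}-1)\overline{(e^{i\langle x,s\rangle}-1)}\,f(x)\,dx$, which is the covariance of the representation in $(i)$; hence \eqref{e:int-rep-spectral-f-1} holds.

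The main obstacle I anticipate lies in part $(i)$: establishing the vector-valued, multi-parameter Yaglom-type representation in a canonical form and then ruling out the drift $V$. In the vector-range case the anti-Hermitian part of the covariance is not pinned down by the variance function $D$, so separating and scaling the linear term and running the spectral-gap argument requires care; this is precisely where the strict inequality $\max\Re(\textnormal{eig}(H))<\min\Re(\textnormal{eig}(E^{*}))$ in \eqref{e:exist_harmon_representation_conditions_eigens} is indispensable. The remaining ingredients --- the polar-coordinate integrability estimates, the density computation in $(ii)$, and the square-root construction --- are routine given the machinery of Bierm\'{e} et al.\ \cite{bierme:meerschaert:scheffler:2007} and Lemma \ref{l:multidim-mfbm-homog-f-ae}, and for $m=1$ the argument reduces to that of Didier and Pipiras \cite{didier:pipiras:2011}.
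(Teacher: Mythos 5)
Your proposal is correct and follows the same overall route as the paper: Yaglom's spectral representation for stationary-increment Gaussian vector fields, the uniqueness of its ingredients to transfer the operator self-similarity \eqref{e:operator_scaling} onto the control measure and the linear term, and Lemma \ref{l:multidim-mfbm-homog-f-ae} plus a square-root construction for part $(ii)$. The one place where you genuinely diverge is the elimination of the linear (drift) term. The paper argues by contradiction: it picks a nonzero row $X_{i,\bullet}$ and a sequence of unit vectors $t_k$ along which the quadratic form $t_k^* c_k^{E^*-(\eta_*-\varepsilon)}\Sigma_i c_k^{E-(\eta_*-\varepsilon)}t_k$ blows up, while the corresponding entry of $c_k^{H-(\eta_*-\varepsilon)}Xt_k$ tends to $0$ in distribution. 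You instead vectorize $Vc^E\stackrel{{\mathcal L}}{=}c^HV$ and observe that the conjugating matrix $c^{-E^*}\otimes c^H$ has spectral radius $c^{\max\Re(h_j)-\min\Re(e_i)}\to 0$ as $c\to\infty$, forcing the covariance of ${\rm vec}(V)$ to vanish; this is arguably cleaner, but note it only shows that $V$ equals its mean $\mu$ a.s., so you must still invoke $\mu c^E=c^H\mu$ together with the same spectral gap to conclude $\mu=0$ (the paper's use of the second-moment matrix $\Sigma_i=EX_{i,\bullet}^*X_{i,\bullet}$ handles mean and fluctuation in one stroke). Two further remarks: the integrability $\int\min(1,\|x\|^2)\,{\rm tr}\,F(dx)<\infty$ comes for free from Yaglom's theorem, so your proposed re-derivation of it from the homogeneity of $F$ is unnecessary; and in part $(ii)$ your explicit construction of a left-homogeneous square root $g(x)=\tau_{E^*}(x)^{-H_E}g_0(l_{E^*}(x))$ with $g_0(-\theta)=\overline{g_0(\theta)}$ is in fact more careful than the paper's terse ``$g=\psi^{1/2}$ a.e.'', since the positive semidefinite square root of an operator-homogeneous $\psi$ need not itself be left-homogeneous.
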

\begin{proof}
By the general harmonizable representation for vector random fields with stationary increments in Yaglom \cite{yaglom:1957}, Theorem 7, the OFBF $B_{E,H}$ can be represented as
\begin{equation}\label{e:yaglom_s_representation}
\{ B_{E,H}(t) \}_{t \in \bbR^m}\stackrel{{\mathcal L}}= \Big\{\int_{\bbR^m} (e^{i \langle t ,x \rangle}-1) \widetilde{B}_F(dx) + X t + Y \Big\}_{t \in \bbR^m},
\end{equation}
for some Hermitian Gaussian random measure $\widetilde{B}_F(dx)$ with a (symmetric) control measure $F(dx)$. The terms $Y$ and $X$ are, respectively, a Gaussian random vector in $\bbR^n$ and a Gaussian random matrix in $M(n,m,\bbR)$ with $ \mu : = EX $ and $\Sigma_X := \textnormal{Var} (\textnormal{vec}(X))$. The random components $\widetilde{B}_F(dx)$, $X$, $Y$ are independent. Since $\min \Re(\eig(H)) > 0$ by \eqref{e:exist_harmon_representation_conditions_eigens}, the first bound in (\ref{e:tau(x)_vs_|x|_small_|x|}) implies that $\| c^H \| \to 0$, as $c \to 0$.
Then, $B_{E,H}(0) = B_{E,H}(c^E0) \stackrel{d}=c^H B_{E,H}(0) \rightarrow 0$, as $c \rightarrow 0^+$, and hence $B_{E,H}(0) = 0$ a.s. This implies that $Y = 0$ a.s. in the representation \eqref{e:yaglom_s_representation}.

We now establish that the measure $F$ satisfies the homogeneity relation \eqref{e:int-rep-spectral-F-homog-meas}. By operator self-similarity,
$$
\{c^H B_{E,H}(t)\}_{t \in \bbR^m}  \stackrel{d}= \{B_{E,H}(c^Et)\}_{t \in \bbR^m} \stackrel{d}= \Big\{ \int_{\bbR^m} (e^{i \langle x, c^E t\rangle}-1) \widetilde{B}_F(dx) + X \hspace{1mm}c^Et\Big\}_{t \in \bbR^m}
$$
$$
\stackrel{d}= \Big\{ \int_{\bbR^m} (e^{i \langle y, t\rangle}-1) \widetilde{B}_{F_E}(dy) + X\hspace{1mm}c^Et\Big\}_{t \in \bbR^m},
$$
where we made the change of variables $c^{E^*}x = y$ and the control measure $F_E(\cdot)$ is defined by $F_E(dy) = F(d(c^{-E^*}y))$. However, the measure $F$, the matrix $\Sigma_X$ and the vector $\mu$ are uniquely determined by the field $B_{E,H}$ (Yaglom \cite{yaglom:1957}, Theorem $7'$). Therefore, the relation \eqref{e:int-rep-spectral-F-homog-meas} holds, as well as
\begin{equation}\label{e:scaling_of_the_measure_and_linear_term}
\{X \hspace{1mm}c^{E} t \}_{t \in \bbR^m}\stackrel{{\mathcal L}}= \{c^H X\hspace{1mm} t\}_{t \in \bbR^m}.
\end{equation}
We now show that $X = 0$ a.s. By contradiction, if $X \neq 0$, then there exists an index $i = 1 ,\hdots,n$ such that the corresponding row random vector $X_{i, \bullet}$ is not identically zero. Since the latter is Gaussian, we can write $\Sigma_{i} := EX^*_{i, \bullet}X_{i, \bullet}$, where $\Sigma_{i} \neq 0$. Let $\eta_* = \min \Re(\textnormal{eig}(E))$, and by (\ref{e:exist_harmon_representation_conditions_eigens}), let $\varepsilon > 0 $ be such that $\eta_* - \varepsilon > \max \Re(\textnormal{eig}(H))$. Consider the random (column) vector $c^{E^* - (\eta_* - \varepsilon)} X^*_{i, \bullet}$. Its variance is
$$
M(m,\bbR) \ni \textnormal{Var}(c^{E^* - (\eta_* - \varepsilon)} X^*_{i, \bullet} ) = E(c^{E^* - (\eta_* - \varepsilon)} \hspace{1mm}X^*_{i, \bullet} \hspace{1mm} X_{i, \bullet} \hspace{1mm}c^{E - (\eta_* - \varepsilon)} ) \stackrel{d}= c^{E^* - (\eta_* - \varepsilon)} \Sigma_{i} c^{E - (\eta_* - \varepsilon)}.
$$
Now take a sequence $c_k \rightarrow \infty$ as $k \rightarrow \infty$. Since $\Sigma_{i} \neq 0$, then there exists an associated sequence $t_k \in S^{m-1}$ such that
\begin{equation}\label{e:quadratic_form_-->_infty}
t^*_k c^{E^* - (\eta_* - \varepsilon)}_k \Sigma_{i} c^{E - (\eta_* - \varepsilon)}_k t_k \rightarrow \infty.
\end{equation}
We obtain a Gaussian random sequence $\{X_{i, \bullet} \hspace{1mm}c^{E - (\eta_* - \varepsilon)}_{k} t_{k} \}_{k \in \bbN}\in \bbR$
whose second moment satisfies the relation \eqref{e:quadratic_form_-->_infty}. Such random sequence corresponds to one of the entries on the left-hand side of the scaling expression
\begin{equation}\label{e:scaling_relation_linear_term_after_subtracting_eta}
\bbR^n \ni  X c^{E - (\eta_* - \varepsilon)}_{k} t_{k} \stackrel{d}= c^{H - (\eta_* - \varepsilon)}_{k}  Xt_{k}.
\end{equation}
There is a corresponding scalar-valued entry on the right-hand side of \eqref{e:scaling_relation_linear_term_after_subtracting_eta}. However, since $t_k$ is a unit vector, $c^{H - (\eta_* - \varepsilon)}_{k}  Xt_{k} \stackrel{d}\rightarrow 0$, $k \rightarrow \infty$.
This is a contradiction, since $X$ is Gaussian. Therefore, $X = 0$ a.s., whence representation \eqref{e:int-rep-spectral-F} follows.

We now show \eqref{e:int-rep-spectral-f-1}. Since $X$ is Gaussian and real, we can write $\widetilde{Y}(dx) = \widehat{a}(x) \widetilde{B}(dx)$, where $\widehat{a}(x) = \widehat{f}^{1/2}(x)$. Again by operator self-similarity, second moments and a change-of-variables, we arrive at the relation
$c^H\widehat{a}(x)\widehat{a}(x)^*c^{H^*} = c^{-\textnormal{tr}(E^*)I}\widehat{a}(c^{-E^*}x)\widehat{a}(c^{-E^*}x)^* $ $dx\textnormal{-a.e.}$, $c > 0$. Thus,
\begin{equation}\label{e:a(x)a(x)}
f(c^{E^*}x) = c^{-H_E}f(x) c^{-H^*_E} \quad dx\textnormal{-a.e.}
\end{equation}
By Lemma \ref{l:multidim-mfbm-homog-f-ae}, there exists an $(E,H)$--homogeneous function $\psi$ such that $\psi(x)= f(x)$ $dx$-a.e. Therefore, $\psi$ is positive semi-definite a.e., and the conclusion follows. $\Box$\\
\end{proof}

\begin{remark}
The inequality assumption \eqref{e:exist_harmon_representation_conditions_eigens} generalizes its counterpart for univariate self-similar stochastic processes, namely, $H < E = 1$. However, the inclusion or not of equalities is a more subtle matter. As a consequence of Maejima and Mason \cite{maejima:mason:1994}, Corollary 2.1, the eigenvalues $h_k$, $k = 1, \hdots, n$, of the exponent $H$ of an operator self-similar vector stochastic process must satisfy the constraint $\Re(h_k) \leq E=1$. Nonetheless, one can find matrices $H$ with unit eigenvalues that do not correspond to OFBMs (Didier and Pipiras \cite{didier:pipiras:2011}, Remark 3.2).
\end{remark}

\begin{remark}\label{r:converse_harmonizable_representation}
A converse to the harmonizable representation \eqref{e:int-rep-spectral-f-1} can also be established, namely, that the right-hand side of (\ref{e:int-rep-spectral-f-1}) is a well-defined OFBF. For this purpose, one must show that
\begin{equation}\label{e:exist_harmon_repres_integrability_condition}
\int_{\bbR^m} |e^{i\langle t,x \rangle}-1|^2 \psi(x) dx =: \int_{\bbR^m} h(x)dx< \infty.
\end{equation}
It suffices to check the behavior of the kernel $h(x)$ in \eqref{e:exist_harmon_repres_integrability_condition} for $x \in S_0$ and as $\| x\| \rightarrow 0$ or $\infty$, where the latter limits can be equivalently expressed as $\tau_E(x) \rightarrow 0$ and $\infty$. This can be done conveniently by means of an entrywise application of the change-of-variables formula \eqref{e:polar_coordinates}. Since the latter formula assumes integrability of the original expression, one can build a truncation argument based on $h_A(x) = 1_{\{1/A \leq \tau_E(x) \leq A\}} h(x)$, $A > 0$, and the dominated convergence theorem. It thus suffices to show that the condition
\begin{equation}\label{e:int_polar_domain<infty}
\int^{\infty}_{0} \int_{S_0} |e^{i \langle t, r^{E^*}\theta\rangle}-1|^2 \|r^{-H_E}\|^2 \sup_{\theta \in S_0} \|\psi(\theta) \| r^{\textnormal{tr}(E^*)-1} \sigma(d \theta) dr < \infty,
\end{equation}
expressed in polar coordinates, holds. Indeed, \eqref{e:int_polar_domain<infty} is valid if the assumption \eqref{e:exist_harmon_representation_conditions_eigens} is in place and if $\psi$ is bounded on the sphere $S_0$. This can be established by a multivariate extension of the calculations carried out in Bierm\'{e} et al.\ \cite{bierme:meerschaert:scheffler:2007}, Theorem 4.1.
\end{remark}

\begin{remark}
In contrast with the case where $m=1$ (see Didier and Pipiras \cite{didier:pipiras:2011}, Theorem 3.1), in general one cannot show that the control measure $F$ in Theorem \ref{t:ofbf_harmonizable_repres} is absolutely continuous. A counter-example can be built for $m=2$ and $n=1$ based on a singular measure on $\bbR^2$. Let $m(dx) = 1_{\{x_1 = x_2\}} dx_1$, i.e., the measure of a set is the Lebesgue measure of its intersection with the geometric locus $\{x \in \bbR^2: x_1 = x_2\}$. Define the random field for $t \in \bbR^2$ by
$$
X(t) = \int_{\bbR^2}(e^{i \langle t,x\rangle}-1)\norm{x}^{-(d+1)}\widetilde{Y}(dx),  \quad d
\in (-1/2,1/2),
$$
where $\widetilde{Y}(dx)$ is a Hermitian Gaussian random measure with control measure $E|\widetilde{Y}(dx)|^2 = m(dx)$. Then,
$$
E(X(c^I s)X(c^I t)) = \int_{\bbR^2}(e^{i\langle
cs,x\rangle}-1)(e^{-i\langle ct,x\rangle}-1)\norm{x}^{-2(d+1)} m(dx)
$$
$$
= \int_{\bbR}(e^{i cx(s_1+s_2)}-1)(e^{-icx(t_1+t_2)}-1)2^{-(d+1)}(x^2)^{-(d+1)}dx
$$
$$
=\int_{\bbR}(e^{i y(s_1+s_2)}-1)(e^{-iy(t_1+t_2)}-1)2^{-(d+1)}\Big(\frac{y^2}{c^2}\Big)^{-(d+1)}d\Big(\frac{y}{c}\Big)= c^{2H}E(X(s)X(t)),
$$
where $H = d + 1/2$. The process is well-defined and proper by taking $c = 1$ and $s=t$, noting that the resulting integral is finite and greater than zero. Thus, $X$ is an OFBF with exponents $E=I$ and $H = d + 1/2$.
\end{remark}

\subsection{Moving average representations}

The next theorem, Theorem \ref{t:int-rep-time}, draws upon the harmonizable representation of OFBFs established in Theorem \ref{t:ofbf_harmonizable_repres} to construct a corresponding moving average representation. The spectral filter $g$ in \eqref{e:int-rep-spectral-f-1} is used to generate the real-valued filter $\varphi$ that enters into the latter. Indeed, let
$$
\widehat{f}_t (x) = (e^{i \langle x, t \rangle} -1) g(x) \in L^2(\bbR^m)
$$
be the kernel function appearing in the harmonizable representation of the OFBF $B_{E, H}$. Then, by using Parseval's identity, $B_{E,H}$ can also be represented as
$$ \left\{ B_{E, H} (t)\right\}_{t \in \bbR^m} \ind \left\{ \int_{\bbR^m} f_t(u) B(du) \right\}_{t \in \bbR^m},$$
where
\begin{equation} \label{e:ft(u)} f_t(u) = \frac{1}{(2\pi)^m} \int_{\bbR^m} e^{-i \langle x, u \rangle} \widehat{f}_t(x) dx \quad \left( \in L^2(\bbR^m) \right)
\end{equation}
$$ = \frac{1}{(2 \pi)^m} \int_{\bbR^m} e^{-i \langle x, u \rangle} ( e^{i \langle x, t \rangle} -1 ) g(x) dx $$
\begin{equation} \label{e:ft(u)-1}
=  \frac{1}{(2 \pi)^m} \int_{\bbR^m} \left( \cos(\langle x, t-u \rangle) - \cos \left( \langle x, -u \rangle \right) \right) g(x) dx.
\end{equation}
The bulk of the proof of Theorem \ref{t:int-rep-time} amounts to expressing the integral (Fourier transform) \eqref{e:ft(u)-1} as $\varphi(t-u) - \varphi(-u)$, for suitable $\varphi$ (see expressions \eqref{e:multidim-mfbm-rep-time-proof-1} and \eqref{e:multidim-mfbm-rep-time-proof-4} in the proof). Note that the construction of this connection between the Fourier and parameter domains demands additional assumptions on the spectral filter function $g$ and on the exponents $E$ and $H$.

\begin{theorem}\label{t:int-rep-time}
Let $B_{E,H} = \{B_{E,H}(t)\}_{t \in \bbR^m}$ be an OFBF with exponents $(E,H)$. Assume that the following conditions are in place:
\begin{itemize}
\item[$(c.1)$] the spectral filter function $g$ in \eqref{e:int-rep-spectral-f-1} is differentiable in the sense that
\begin{equation}\label{e:int-rep-time-cond-1}
\frac{\partial^m}{\partial x_1\ldots \partial x_m} g(x) = \widetilde g(x) \quad dx\mbox{-a.e.},
\end{equation}
where $x = (x_1, \ldots, x_m)$ and the differentiation above is taken entry-wise;
\item[$(c.2)$] $\widetilde g$ is bounded on $S_0$;
\item[$(c.3)$] the matrix exponents $E$, $H$ are diagonalizable with real eigenvalues and real eigenvectors; and
\item[$(c.4)$]
\begin{equation}\label{e:int-rep-time-cond-2}
\max \eig(H) + \tr(E)/2 < m \min \eig(E).
\end{equation}
\end{itemize}
Then, there is a function $\varphi:\bbR^m \to \bbR^n$, depending on $E$ and $H$, which satisfies
\begin{itemize}
\item [(i)] $\varphi(t-\cdot) - \varphi(-\cdot) \in L^2(\bbR^m)$, $t \in \bbR^m$;
\item [(ii)] $\varphi$ is $(E,H - \tr(E) I/2)$--left-homogeneous,
\end{itemize}
and such that
\begin{equation}\label{e:int-rep-time}
\{B_{E,H}(t)\}_{t\in \bbR^m} \stackrel{{\mathcal L}}{=} \Big\{ \int_{\bbR^m} (\varphi(t-u) - \varphi(-u)) B(du) \Big\}_{t\in\bbR^m},
\end{equation}
where $B(du)$ is an $\bbR^n$--valued Gaussian random measure on $\bbR^m$ with Lebesgue control measure.
\end{theorem}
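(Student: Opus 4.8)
The plan is to exhibit $\varphi$ as a regularized inverse Fourier transform of the spectral filter $g$ in \eqref{e:int-rep-spectral-f-1}, to identify the time-domain kernel $f_t$ of \eqref{e:ft(u)}--\eqref{e:ft(u)-1} with $\varphi(t-\cdot)-\varphi(-\cdot)$, and then to read off \eqref{e:int-rep-time} from the Parseval isometry between the harmonizable and moving average representations recorded just before the statement. By $(c.3)$ I would fix a real basis adapted to the eigenstructure, so that $c^{E}$ and $c^{E^*}$ act coordinatewise and the mixed partial derivative in \eqref{e:int-rep-time-cond-1} interacts transparently with the $(E^*,-H_E)$-left-homogeneity of $g$ furnished by Theorem \ref{t:ofbf_harmonizable_repres}$(ii)$, where $H_E=H+\tr(E^*)I/2$. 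Since $e^{-i\langle x,v\rangle}=\big(\prod_{j=1}^{m}(-iv_j)\big)^{-1}\partial_{x_1}\cdots\partial_{x_m}\prod_{j=1}^{m}(e^{-iv_jx_j}-1)$, a formal $m$-fold integration by parts in \eqref{e:ft(u)} dictates the candidate
\[
\varphi(v):=\frac{(-1)^{m}}{(2\pi)^{m}\prod_{j=1}^{m}(-iv_j)}\int_{\bbR^{m}}\ \prod_{j=1}^{m}(e^{-iv_jx_j}-1)\ \widetilde g(x)\,dx ,
\]
defined for $v$ off the coordinate hyperplanes (where $\varphi$ has a removable singularity), up to replacing $\varphi$ by $\varphi(-\,\cdot\,)$, which is harmless for $(ii)$.

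\smallskip
First I would show this integral converges absolutely. Differentiating $g(c^{E^*}x)=c^{-H_E}g(x)$ coordinatewise shows $\widetilde g$ is $(E^*,-H_E-\tr(E^*)I)$-left-homogeneous, hence $\widetilde g(x)=\tau_{E^*}(x)^{-H_E-\tr(E^*)I}\widetilde g(l_{E^*}(x))$ with $\widetilde g$ bounded on $S_0$ by $(c.2)$. Passing to polar coordinates via \eqref{e:polar_coordinates}, and using \eqref{e:tau(x)_vs_|x|_small_|x|}--\eqref{e:tau(x)_vs_|x|_large_|x|}, the bounds $|e^{-iv_jx_j}-1|\le\min(|v_jx_j|,2)$, and the coordinatewise estimate $\prod_j|x_j|\asymp\tau_{E^*}(x)^{\tr(E^*)}\prod_j|(l_{E^*}(x))_j|$, the integrand is dominated, as $\tau_{E^*}(x)\to0$, by a constant times $\tau_{E^*}(x)^{-\max\eig(H)-\tr(E)/2}$, which is integrable against $r^{\tr(E^*)-1}dr$ exactly when $\max\eig(H)<\tr(E)/2$ --- a consequence of $(c.4)$ since $\tr(E)\ge m\min\eig(E)$ --- and, as $\tau_{E^*}(x)\to\infty$, by a constant times $\tau_{E^*}(x)^{-\tr(E^*)-\min\eig(H_E)}$, which is integrable because $\min\eig(H)>0$ by \eqref{e:exist_harmon_representation_conditions_eigens}. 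Property $(ii)$ then follows from the substitution $y=c^{E^*}x$ together with $\widetilde g(c^{-E^*}y)=c^{H_E+\tr(E^*)I}\widetilde g(y)$ and $dy=c^{\tr(E^*)}dx$: the powers of $c$ combine to $c^{H_E-\tr(E^*)I}=c^{H-\tr(E)I/2}$. Reality of $\varphi$, consistent with \eqref{e:int-rep-time}, comes from the Hermitian symmetry $g(-x)=\overline{g(x)}$ --- forced by $\widehat f_t(-x)=\overline{\widehat f_t(x)}$ for the real field $B_{E,H}$ --- which gives $\widetilde g(-x)=(-1)^{m}\overline{\widetilde g(x)}$ and hence $\overline{\varphi(v)}=\varphi(v)$ after the change of variables $x\mapsto-x$.

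\smallskip
The crux, and the main obstacle, is the rigorous identification $f_t(u)=\varphi(u-t)-\varphi(u)$, equivalently $\varphi(t-u)-\varphi(-u)$ after the sign relabeling, for $dtdu$-a.e.\ $u$. Writing the integrand of \eqref{e:ft(u)} as $(e^{-i\langle x,u-t\rangle}-e^{-i\langle x,u\rangle})g(x)$ and applying the operator identity above to each exponential reduces this to legitimizing the $m$-fold integration by parts $\int[\partial_{x_1}\cdots\partial_{x_m}G]\,g\,dx=(-1)^{m}\int G\,\widetilde g\,dx$ with $G(x)=\prod_j(e^{-iv_jx_j}-1)$. Because $g$ only decays like $\tau_{E^*}(x)^{-\min\eig(H_E)}$ at infinity and blows up like $\tau_{E^*}(x)^{-\max\eig(H_E)}$ at the origin, the boundary terms cannot be discarded by naive estimation; instead I would adopt the truncation device of Remark \ref{r:converse_harmonizable_representation}, replacing $g$ by a smoothly truncated $g_A$ supported in a shell $\{1/A\le\tau_E(x)\le A\}$, performing the now legitimate integration by parts on $\int[\partial_{x_1}\cdots\partial_{x_m}G]g_A\,dx$, and letting $A\to\infty$ by dominated convergence, using the absolute convergence of $\int G\widetilde g\,dx$ just established. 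The delicate point is the vanishing, as $A\to\infty$, of the Leibniz terms carrying derivatives of the cutoff: these couple $g$ and its lower-order partial derivatives with anisotropically large factors produced by differentiating the cutoff, and it is precisely here that the full force of $(c.4)$ --- whose term $m\min\eig(E)$ offsets the $m$ differentiations and the worst directional scaling --- is consumed. This bookkeeping is the technical heart of the proof, and the reason the case $m=1$, where a single regularization suffices (cf.\ Didier and Pipiras \cite{didier:pipiras:2011}), is comparatively straightforward.

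\smallskip
Finally, $\widehat f_t=(e^{i\langle\cdot,t\rangle}-1)g\in L^2(\bbR^{m})$ under \eqref{e:exist_harmon_representation_conditions_eigens} --- established in the proof of Theorem \ref{t:ofbf_harmonizable_repres} and Remark \ref{r:converse_harmonizable_representation}, once one notes that $g$, being continuous on the compact sphere $S_0$, is bounded there --- so Plancherel's theorem gives $f_t=\mathcal F^{-1}[\widehat f_t]\in L^2(\bbR^{m})$; combined with the identification above, this is $(i)$. The Parseval isometry between the harmonizable representation \eqref{e:int-rep-spectral-f-1} and the moving average representation, recorded in the display preceding the theorem, then produces \eqref{e:int-rep-time} with $B(du)$ a real-valued Gaussian white noise on $\bbR^{m}$, completing the argument. $\Box$
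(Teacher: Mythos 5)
Your candidate $\varphi$ is essentially the paper's: after the orthant decomposition the paper's kernel $\varphi_{0,\sigma}(v)/\prod_j(\sigma_j v_j)$ with $k_{\sigma,v}(y)=\Re\big(i^{-m}\prod_j(e^{iy_jv_j\sigma_j}-1)\big)$ is exactly your regularized inverse Fourier transform of $g$ written against $\widetilde g$, and your polar-coordinate integrability analysis (including the homogeneity exponent $-H_E-\tr(E^*)I$ for $\widetilde g$ and the left-homogeneity computation giving $c^{H-\tr(E)I/2}$) is sound --- in fact your exact product estimate $\prod_j|x_j|= \tau_{E^*}(x)^{\tr(E^*)}\prod_j|(l_{E^*}(x))_j|$ for diagonal $E$ is sharper than the paper's bound $\prod_j y_j\le C\|y\|^m\le Cr^{m e_{\min}}$, which is where the paper actually consumes the full strength of $(c.4)$.

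The genuine gap is the step you yourself flag as ``the technical heart of the proof'' and then do not carry out: the identification $f_t(u)=\varphi(t-u)-\varphi(-u)$. Worse than being merely omitted, the route you propose for it --- a smooth cutoff $g_A$ supported in a shell followed by an $m$-fold integration by parts --- cannot be executed under the stated hypotheses. The Leibniz expansion of $\partial_{x_1}\cdots\partial_{x_m}(G\,g_A)$ produces terms involving the intermediate mixed partials $\partial^S g$ for every proper subset $S\subsetneq\{1,\ldots,m\}$, but conditions $(c.1)$--$(c.2)$ only assert the existence and control of the full derivative $\partial_{x_1}\cdots\partial_{x_m}g=\widetilde g$; nothing is assumed about $\partial_{x_1}g$, $\partial_{x_1}\partial_{x_2}g$, etc. The paper avoids this entirely by first splitting $\bbR^m$ into orthants, writing $g(x)=\int_{[x,\infty)^m}\widetilde g(y)\,dy$ on $[0,\infty)^m$, and then interchanging the two integrals by Fubini; the interchange is justified through the $L^2$ truncation to $[0,\ell)^m$, which generates a single boundary term $g(L)I_2(L)$ with $L=(\ell,\ldots,\ell)$, shown to vanish since $g(L)\to0$ and $I_2(L)$ is bounded. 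Only $g$ and $\widetilde g$ ever appear. To repair your argument you should replace the cutoff-and-integrate-by-parts scheme by this integrated-form-plus-Fubini device (or supply and justify additional hypotheses on the intermediate partials). A secondary, smaller omission: the reduction to diagonal $E,H$ via $(c.3)$ needs the explicit conjugation $\widetilde B_{E_0,H_0}(t)=V^{-1}B_{E,H}(Wt)$ together with the change of variables $u=W^{-1}v$ in the stochastic integral to check that left-homogeneity of $\varphi_0$ transfers to $\varphi(v)=|\det W|^{-1/2}V\varphi_0(W^{-1}v)$; ``fixing a basis'' alone does not record why the exponent $H-\tr(E)I/2$ is preserved.
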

\begin{proof}
It suffices to consider the case where the exponents $E \in M(m)$, $H \in M(n)$ are diagonal. Indeed, in this case we can write $E = W E_0 W^{-1}$ and $H = V H_0 V^{-1}$, where $E_0 \in M(m)$ and $H_0 \in M(n)$ are diagonal matrices, and $W \in M(m)$, $V \in M(n)$. The newly defined field $\widetilde{B}_{E_0, H_0}(t) = V^{-1} B_{E,H}(Wt)$ is then an OFBF with diagonal exponents $E_0$ and $H_0$, and can be represented as
$$
\left\{ \widetilde{B}_{E_0, H_0} (t) \right\}_{t \in \bbR^m} \ind \left\{ \int_{\bbR^m} (\varphi_0(t-u) - \varphi_0(-u)) B(du) \right\}_{t \in \bbR^m},
$$
where $\varphi_0$ is $(E_0, H_0 - \textnormal{tr}(E_0)I/2)$--left-homogeneous. Therefore,
\begin{eqnarray}
\left\{ B_{E,H}(t) \right\}_{t \in \bbR^m} & \ind & \left\{ \int_{\bbR^m} (V \varphi_0(W^{-1}t - u) - V \varphi_0(-u) ) B(du) \right\}_{t \in \bbR^m} \\
& \ind & \left\{ \int_{\bbR^m} (V \varphi_0 (W^{-1}(t - v)) - V \varphi_0(- W^{-1}v)) \left|{\rm det}(W) \right|^{-1/2} B(dv) \right\}_{t \in \bbR^m} \\
& \ind & \left\{ \int_{\bbR^m}  \left(\varphi(t-v) - \varphi(-v) \right) B(dv)  \right\}_{t \in \bbR^m},
\end{eqnarray}
after the change of variables $u = W^{-1}v$ (so that $du = |{\rm det}(W)|^{-1} dv$), where
$$ \varphi(v) = |{\rm det}(W)|^{-1/2} V \varphi_0 (W^{-1} v).
$$
By using the fact that $\tr(E_0) = \tr(E)$, we obtain
$$ \varphi(c^E u) = |{\rm det}(W)|^{-1/2} V \varphi_0 (W^{-1} c^E v) = |{\rm det}(W)|^{-1/2} V \varphi_0 (W^{-1} W c^{E_0} W^{-1} v) $$
$$ = |{\rm det}(W)|^{-1/2} V \varphi_0 (c^{E_0} W^{-1} v) = |{\rm det}(W)|^{-1/2} V c^{H_0 - \tr(E_0)I /2 } \varphi_0 (W^{-1} v) $$
$$ = |{\rm det}(W)|^{-1/2} c^{H - \tr(E)I /2 } V \varphi_0 (W^{-1} v) = c^{H -\tr(E)I/2} \varphi(v).
$$
Therefore, the function $\psi$ is $(E,H - \tr(E) I/2)$--left-homogeneous.

Thus, suppose without loss of generality that $E \in M(m)$ and $H \in M(n)$ are diagonal. Let $f_t(u)$ be an in \eqref{e:ft(u)-1}. We want to express such function as $\varphi(t-u)-\varphi(-u)$, where $\varphi$ is $(E,H - \tr(E)I/2)$--left-homogeneous (some of the technical difficulties involved are discussed in Remark \ref{re:tech diff} below). Since $H \in M(n)$ is diagonal, it suffices to consider the case $n=1$, though the argument also applies when $n\ge 1$ by considering the pertinent entry-wise expressions.

After a change-of-variables in each orthant and in view of the fact that $g$ is symmetric, we can write
\begin{equation}\label{e:multidim-mfbm-rep-time-proof-1}
    (2\pi)^m f_t(u) = \sum_\sigma \int_{[0,\infty)^m} ( \cos(\langle x,(t-u)_\sigma\rangle) - \cos(\langle x,(-u)_\sigma \rangle) ) g(x) dx =: \sum_\sigma f_{\sigma,t}(u),
\end{equation}
where the sum is over all $\sigma = (\sigma_1,\ldots,\sigma_m)$ with $\sigma_j\in\{-1,1\}$,  and $(v)_\sigma = (\sigma_1v_1,\ldots,\sigma_m v_m)$. It is then enough to show that
\begin{equation}\label{e:multidim-mfbm-rep-time-proof-4}
  f_{\sigma,t}(u) = \varphi_\sigma(t-u) - \varphi_\sigma(-u)
\end{equation}
for some $(E,H-\tr(E)/2)$--left-homogeneous $\varphi_\sigma$. As with $f_t(u)$ in (\ref{e:ft(u)}), each integral $f_{\sigma,t}(u)$ is defined in the $L^2(\bbR)$ sense, that is, as the inverse Fourier transform of an $L^2(\bbR)$ function.

For notational simplicity, set
$$
q = \textnormal{tr}(E) = \textnormal{tr}(E^*).
$$
Since $g$ is $(E^*,-H-q/2)$--left-homogeneous and differentiable in the sense of (\ref{e:int-rep-time-cond-1}), we have
\begin{equation}\label{e:multidim-mfbm-rep-time-proof-2}
    g(x) = \int_{[x,\infty)^m} \widetilde g(y) dy,
\end{equation}
where $\widetilde g$ is $(E^*,-H-3q/2)$--left-homogeneous. Then,
$$
f_{\sigma,t}(u) = \int_{[0,\infty)^m} ( \cos(\langle x,(t-u)_\sigma\rangle) - \cos(\langle x,(-u)_\sigma \rangle) ) \Big(  \int_{[x,+\infty)^m} \widetilde g(y) dy \Big) dx
$$
\begin{equation}\label{e:multidim-mfbm-rep-time-proof-3}
  = \int_{[0,\infty)^m} dy \widetilde g(y) \Big( \int_{[0,y]^m} \cos(\langle x,(t-u)_\sigma\rangle) dx
  - \int_{[0,y]^m} \cos(\langle x,(-u)_\sigma\rangle) dx  \Big),
\end{equation}
where the change of the order of integration will be justified at the end of the proof. Observe now that
$$
\int_{[0,y]^m} \cos(\langle x,v\rangle) dx = \Re \int_{[0,y]^m} e^{i \langle x,v\rangle} dx =
\frac{1}{v_1 \ldots v_m} \Re \Big(i^{-m} (e^{iy_1v_1}-1)\ldots (e^{iy_mv_m}-1) \Big).
$$
Then, by (\ref{e:multidim-mfbm-rep-time-proof-3}), the relation (\ref{e:multidim-mfbm-rep-time-proof-4}) holds with
$$
\varphi_{\sigma}(v) = \int_{[0,\infty)^m} \frac{\widetilde g(y)}{(v_1 \sigma_1) \ldots (v_m \sigma_m)} \Re \Big(i^{-m} (e^{iy_1(v_1 \sigma_1)}-1) \ldots (e^{iy_m (v_m \sigma_m)}-1) \Big)dy
$$
\begin{equation}\label{e:multidim-mfbm-rep-time-proof-5}
  =: \int_{[0,\infty)^m} \frac{\widetilde g(y)}{(v_1 \sigma_1)\ldots (v_m \sigma_m)} k_{\sigma,v}(y) dy =: \frac{\varphi_{0,\sigma}(v)}{(v_1 \sigma_1) \ldots (v_m \sigma_m)}.
\end{equation}
Since $E$ is diagonal, the function $\varphi_{\sigma}(v)$ can be checked to be $(E,H-qI/2)$--left-homogeneous, as required. It is then enough to show that this function, or equivalently, the function $\varphi_{0,\sigma}(v)$, is well-defined as a Lebesgue integral.

By the formula (\ref{e:polar_coordinates}) and a truncation argument (see also Remark \ref{r:converse_harmonizable_representation}),
$$
|\varphi_{0,\sigma}(v)| \leq \int_{[0,\infty)^m} |\widetilde g(y)| |k_{\sigma,v}(y)| dy
$$
\begin{equation}\label{e:multidim-mfbm-rep-time-proof-6}
= \int_0^\infty dr\, r^{-H -3q/2} r^{q-1}
\Big( \int_{S_0} |\widetilde g(\theta)| |k_{\sigma,v}( r^{ E^* } \theta)| \sigma(d\theta)\Big),
\end{equation}
where we used the fact that $\widetilde g(r^{E^* } \theta) = r^{-H-3q/2} \widetilde g(\theta)$ by left-homogeneity. Note that the integral over $S_0$ is bounded by a constant. Therefore, around $r= \infty$, the integrand behaves like the power law $r^{- H - q/2 -1}$, and is integrable since
$$
(-H-q/2-1) + 1 = -H - q/2 < 0.
$$
As for the behavior of the integrand around $r=0$, observe that, for $y_1\geq 0,\ldots,y_m\geq 0$, $|k_{\sigma,v}(y)| \leq C y_1\ldots y_m \leq C (y_1 + \ldots + y_m)^m \leq C' \|y\|^m$. Then, the integrand is bounded by
$$
r^{-H-q/2-1} \|r^{E^*} \|^m \le C r^{-H-q/2-1} r^{m e_{\min}},
$$
where $e_{\min} = \min \eig(E)$.
The integrability around $r=0$ follows in view of the assumption \eqref{e:int-rep-time-cond-2}, since
$$
-H - q/2 - 1 + m e_{\min} + 1 = m e_{\min} - H - q/2 > 0.
$$

Finally, we justify the change of the order of integration in (\ref{e:multidim-mfbm-rep-time-proof-3}). Since $f_{\sigma, t}(u)$ is defined in the $L^2(\bbR)$ sense, we have
\begin{equation}\label{e:f_sigma_L2}
f_{\sigma, t}(u) = \lim_{\ell \to \infty} (L^2(\bbR)) \int_{[0, \ell)^m}
\left( \cos( \langle x, (t-u)_\sigma \rangle ) - \cos (\langle x, (-u)_\sigma \rangle ) \right) g(x) dx.
\end{equation}
The idea now is to apply Fubini's theorem in the truncated integral (\ref{e:f_sigma_L2}), and show that the resulting expression converges to the right-hand side of (\ref{e:multidim-mfbm-rep-time-proof-3}) as $\ell \to \infty$. Let $L = \left(\ell, \ell, \ldots, \ell \right) \in \bbR^m$. Then,
$$ \int_{[0, \ell)^m} ( \cos(\langle x, (t-u)_\sigma \rangle ) - \cos (\langle x, (-u)_\sigma \rangle ) ) \left( \int_{[x, \infty)^m} \widetilde{g}(y) dy \right) dx $$
$$ = \int_{[0, \ell)^m} ( \cos(\langle x, (t-u)_\sigma \rangle ) - \cos (\langle x, (-u)_\sigma \rangle ) )  \left( \int_{[x, \ell)^m} \widetilde{g}(y) dy + g(L) \right) dx $$
$$ = \int_{[0, \ell)^m} dy \widetilde{g}(y) \left( \int_{[0, y]^m}  ( \cos(\langle x, (t-u)_\sigma \rangle ) - \cos (\langle x, (-u)_\sigma \rangle ) ) dx \right) $$
$$ + g(L) \int_{[0, \ell)^m}  ( \cos(\langle x, (t-u)_\sigma \rangle ) - \cos (\langle x, (-u)_\sigma \rangle ) )  dx =: I_1 (L) + g(L) I_2(L).$$
The first term $I_1(L)$ converges to the right-hand side of (\ref{e:multidim-mfbm-rep-time-proof-3}) as $L \to \infty$. Indeed, as shown following (\ref{e:multidim-mfbm-rep-time-proof-3}), the integrand is in $L^1([0, \infty)^m)$. The second term $g(L)I_2(L)$ converges to zero since $g(L) \to 0$ and $|I_2(L)|$ is bounded. $\Box$\\
\end{proof}

\begin{remark}\label{re:tech diff} It is illuminating to revisit the technical difficulties involved in the proof of Theorem \ref{t:int-rep-time} by considering the univariate, uniparameter context $m=n=1$. In the latter case, one can pick the natural homogeneous specification $g(x) = c |x|^{-H - 1/2}$. Under such choice, the integral (\ref{e:ft(u)}) can be recast as
$$ c \int_\bbR (\cos(x(t-u)) - \cos(x(-u))) |x|^{-H-1/2} dx = \varphi(t-u) - \varphi(-u), $$
where
\begin{equation}\label{e:psi(v)}
\varphi(v) = \left\{ \begin{array}{l}
\mbox{$\displaystyle c \int_\bbR \cos(xv) |x|^{-H-1/2} dx, \quad \mbox{if~} H < 1/2;$} \vspace{ 2 mm}\\
\mbox{$\displaystyle c \int_\bbR (\cos(xv) -1 ) |x|^{-H-1/2} dx, \quad \mbox{if~} H > 1/2.$} \\
\end{array} \right.
\end{equation}
The expression \eqref{e:psi(v)} is interpreted as involving an improper Riemann integral when $H < 1/2$ and a Lebesgue integral when $H > 1/2$. Neither interpretation carries over to the multiparameter case $m \ge 2$. On one hand, when $m \ge 2$, left-homogeneous functions are no longer simple power functions as above. On the other hand, when $n=1$, the integral
\begin{equation}\label{e:int_(cos-1)g(x)dx}
\int_{\bbR^m} (\cos(\langle x, v \rangle) -1 ) g(x) dx,
\end{equation}
where $g$ is an $(E^*, -H_E)$--(left-)homogenous function, is definable as a Lebesgue integral when $2 H > q$. However, the condition \eqref{e:exist_harmon_representation_conditions_eigens} for the construction of a harmonizable representation yields the parallel constraint $q > m H$. As a consequence, \eqref{e:int_(cos-1)g(x)dx} can only play the role of the fractional filter $\varphi(v)$ in the uniparameter context, namely, when $m = 1$.
\end{remark}

\begin{remark} Sufficient conditions for the integral (\ref{e:int-rep-time}) to be well-defined can be given using Theorem 3.1 in Bierm\'{e} et al.\ \cite{bierme:meerschaert:scheffler:2007}. Suppose for simplicity that $H$ is diagonal with real eigenvalues and real eigenvectors (more generally, the argument below can be adapted to diagonalizable $H$). Assume also that the entry-wise functions $\varphi_{ij}$ in the matrix $\Phi = (\varphi_{ij})_{i,j = 1, \ldots, n}$ are non-negative, and write $\varphi_{ij}(x) =: \xi_{ij}(x)^{h_i - q/2}$, where $q = \tr(E)$. Since each $\varphi_{ij}$ is $(E, h_i - q/2)$--(left-)homogeneous, the function $\xi_{ij}$ is $(E,1)$--homogeneous. Following Definition 2.7 of Bierm\'{e} et al.\ \cite{bierme:meerschaert:scheffler:2007}, suppose the function $\xi_{ij}$ is $(\beta_{ij}, E)$--admissible, that is, for all $x \neq 0$ and $0 < A < B$, there is a constant $C > 0$ such that, for $A \le \| y \| \le B$,
$$ \tau_E(x) \le 1 \Rightarrow |\xi_{ij} (x + y) - \xi_{ij}(y) | \le C \tau_E(x)^{\beta_{ij}}. $$
Then, by Theorem 3.1 in Bierm\'{e} et al.\ \cite{bierme:meerschaert:scheffler:2007}, the integral (\ref{e:int-rep-time}) is well-defined if $0 < h_i < \beta_{ij}$, $i,j=1, \ldots, m$.
\end{remark}

\bibliography{ofbf}

\small

\bigskip

\flushleft
\begin{tabular}{lp{1.9 in}l}
Changryong Baek & & Gustavo Didier\\
Dept.\ of Statistics & & Mathematics Department \\
Sungkyunkwan University & & Tulane University\\
25-2, Sungkyunkwan-ro, Jongno-gu & & 6823 St.\ Charles Avenue   \\
Seoul, 110-745, Korea & & New Orleans, LA 70118, USA \\
{\it crbaek@skku.edu} & &{\it gdidier@tulane.edu} \\
\end{tabular}

\bigskip
\noindent \begin{tabular}{lcl}
Vladas Pipiras & \hspace{3.5cm} &  \\
Dept.\ of Statistics and Operations Research & & \\
UNC at Chapel Hill & &  \\
CB\#3260, Hanes Hall  & & \\
Chapel Hill, NC 27599, USA & &  \\
{\it pipiras@email.unc.edu}& &  \\
\end{tabular}\\

\end{document}